\theoremstyle{plain}
\newtheorem{theorem}{Theorem}[section]
\newtheorem{pred}[theorem]{Proposition}
\theoremstyle{definition}
\newtheorem{defin}[theorem]{Definition}
\newtheorem{remark}[theorem]{Remark}
\newcommand{\ft}{\operatorname{ft}}
\newcommand{\dist}{\operatorname{dist}}
\def\ig#1#2#3#4{\begin{figure}[!ht]\begin{center}%
\includegraphics[height=#2\textheight]{pictures//#1.png}\caption{#4}\label{#3}%
\end{center}\end{figure}}
\title{The Fermat--Torricelli problem in the case of three-point sets in normed planes}
\author{Daniil A. Ilyukhin}
\date{}
\begin{document}

\maketitle

\begin{abstract}
In the paper the Fermat--Torricelli problem is considered. The problem asks a point minimizing the sum of distances to arbitrarily given points in \emph{d}-dimensional real normed spaces. Various generalizations of this problem are outlined, current methods of solving and some recent results in this area are presented. The aim of the article is to find an answer to the following question: in what norms on the plane is the solution of the Fermat--Torricelli problem unique for any three points. The uniqueness criterion is formulated and proved in the work, in addition, the application of the criterion on the norms set by regular polygons, the so-called lambda planes, is shown.
\end{abstract}

\section{Introduction}

The problem of finding a point that minimizes the sum of distances from it to a given set of points in a metric space was first mentioned in the 17th century. In 1643 Fermat posed a problem for three points on the Euclidean plane, and in the same century Torricelli proposed a solution to this problem (\cite{History}).

\ig{pic4}{0.32}{fig:pic4}{The design proposed by Torricelli. The point $T$ is the solution of the problem for the given points $A,B,C$ (\cite{Torricelli})}

Since then, various generalizations of this problem have been considered. The problem was formulated for an arbitrary number of points, the dimension of the space, as well as the norm given in this space. The simplicity of the formulation allows us to consider the problem even in an arbitrary metric space. For example, the problem for four points on the Euclidean plane was solved by D. Fagnano (\cite{bib4}, \cite{bib5}). And for the case of five points, it was proved that the problem is unsolvable in radicals, the proof is given in \cite{bib6} and \cite{bib7}. In addition, there is a generalized problem in which the vertices are considered together with some positive values, called weights. You can read about the development of the weighted problem in the works \cite{bib8}, \cite{arhiv1}, \cite{arhiv2}. In particular, the existence and uniqueness of the solution of such a problem for three points on the Euclidean plane were proved (\cite{bib5}).

In 1934 Jarnik and Kessler \cite{bib9} posed the problem of finding a graph of minimum length connecting a finite number of points in the Euclidean plane, and in 1941 it was discussed in Courant and Robbins's book ``What is Mathematics?`` (\cite {What}), the authors of which named the problem under consideration \emph{Steiner's problem} in honor of Jakob Steiner. The theory of the Steiner problem is presented in the monographs \cite{Extreme1} and \cite{Extreme2}, and in the book ``Theory of Extreme Networks`` (\cite{Extreme3}) Ivanov and Tuzhilin proposed various generalizations of the problem and new lines of research. You can read about the study of extremal networks in normed spaces in \cite{bib12}, \cite{bib13}.

This article will consider the classic version of the problem: finding a point for which the minimum sum of distances to elements of a subset of a metric space is reached. We will call such a formulation \emph{generalized Fermat--Torricelli problem} (or simply \emph{Fermat--Torricelli problem}). The work is based on the article \cite{FTproblem}, which describes the application of a geometric approach to the problem and presents some new results that are obtained in the framework of real finite-dimensional normed spaces, called \emph{Minkowski spaces}. In particular, the authors give the following result:

\begin{theorem}[\cite{FTproblem}]
In Minkowski space, the solution of the Fermat--Torricelli problem is a singleton for any finite set of non-collinear points if and only if the norm is strictly convex.
\end{theorem}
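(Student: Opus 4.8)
The plan is to prove the two implications separately, using throughout the elementary facts that the objective $f(x)=\sum_{i=1}^{n}\|x-p_i\|$ is convex and coercive, so that a minimizer exists and the set $\operatorname{Argmin}f$ is convex and compact. In particular non-uniqueness is equivalent to the existence of a nondegenerate segment on which $f$ is constant and equal to its minimum. I will also use the standard description of strict convexity: the norm is strictly convex if and only if the equality $\|a+b\|=\|a\|+\|b\|$ with $a,b\neq 0$ forces $b=\lambda a$ for some $\lambda>0$; equivalently, the unit sphere contains no nondegenerate segment.

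For the implication \emph{strictly convex $\Rightarrow$ unique}, I would argue by contradiction. Suppose $x_0\neq x_1$ are both minimizers; then $f$ is constant on $[x_0,x_1]$, so each convex function $g_i(t)=\|(x_0-p_i)+t(x_1-x_0)\|$ is affine on $[0,1]$. This is because a finite sum of convex functions that is affine forces each summand to be affine: equality must hold termwise in the midpoint inequalities $g_i(t+h)+g_i(t-h)\ge 2g_i(t)$ once their sum is an equality. Evaluating the affinity of $g_i$ at $t=\tfrac12$ gives $\|(x_0-p_i)+(x_1-p_i)\|=\|x_0-p_i\|+\|x_1-p_i\|$, i.e. equality in the triangle inequality for $a=x_0-p_i$ and $b=x_1-p_i$. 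Strict convexity then forces $a$ and $b$ to be positively proportional (or one of them to vanish), and in each case a short computation places $p_i$ on the line through $x_0$ and $x_1$. Hence all the $p_i$ are collinear, contradicting the hypothesis, so the minimizer is unique. Here the only subtle ingredient is the equality case of the triangle inequality, which is precisely the characterization of strict convexity recalled above.

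For the converse I would prove the contrapositive: if the norm is not strictly convex, then some non-collinear finite set has infinitely many Fermat--Torricelli solutions. Non-strict convexity yields a nondegenerate segment $[u,v]$ on the unit sphere; set $d=v-u$, choose a supporting functional $\phi$ with $\|\phi\|_{*}=1$ and $\phi\equiv 1$ on $[u,v]$ (so $\phi(d)=0$), and let $K=\{au+bv:a,b\ge 0\}$ be the cone over this face, on which $\|\cdot\|=\phi$. Working in $\operatorname{span}(u,v)$ I would place \emph{four} points: two points $p_1,p_2$ on the line $L=\mathbb{R}d$ on opposite sides of the origin, and two points $p_3,p_4$ off $L$, chosen (with large enough offsets) so that along a short segment $S\subset L$ in direction $d$ one has $x-p_1\in\mathbb{R}_{>0}d$, $x-p_2\in\mathbb{R}_{<0}d$, $x-p_3\in K$ and $x-p_4\in -K$. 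A direct computation then shows $f$ is constant on $S$, and since $p_3,p_4\notin L$ the four points are non-collinear; it remains only to certify that this constant value is the global minimum.

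This global certificate is the decisive step and the main obstacle, and it also explains the use of four points. Let $\psi_d$ be a norming functional of $d$ and consider the dual certificate $\eta_1=\psi_d$, $\eta_2=-\psi_d$, $\eta_3=\phi$, $\eta_4=-\phi$; each has dual norm $1$, and by the cone memberships above each $\eta_i$ norms $x-p_i$ for every $x\in S$. The crucial identity is $\eta_1+\eta_2+\eta_3+\eta_4=0$, which gives, for every $x$, the bound $f(x)\ge\sum_i\eta_i(x-p_i)=-\sum_i\eta_i(p_i)$, a constant equal to the value of $f$ on $S$; hence $S\subseteq\operatorname{Argmin}f$ and the solution is not unique. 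The requirement $\sum_i\eta_i=0$ is exactly a balancing (parity) condition: a single flat edge supplies only the functionals $\pm\phi$ (for the off-line points) and $\pm\psi_d$ (for the on-line points), and three such functionals cannot in general be balanced to zero, whereas the two cancelling pairs can. The remaining work is pure bookkeeping: fixing the offsets so the four cone memberships hold uniformly along $S$, and, when $\dim X>2$, extending $\phi$ and $\psi_d$ to all of $X$ by Hahn--Banach without increasing their dual norms, so that both the equality on $S$ and the lower bound persist in the ambient space.
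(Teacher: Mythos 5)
Your proof is correct, but note that the paper does not actually prove this theorem: it is imported from \cite{FTproblem} as background, so the fair comparison is with the machinery the paper builds around it. Your forward implication (strict convexity implies uniqueness) is the standard convexity argument and is complete: constancy of the objective on a segment of minimizers forces each summand $\|x-p_i\|$ to be affine there, the midpoint evaluation yields equality in the triangle inequality, and strict convexity then places every $p_i$ on the line through the two minimizers, contradicting non-collinearity. Your converse is also sound, and the bookkeeping you deferred does go through: with $p_3=-M(u+v)$, $p_4=M(u+v)$ and $M$ large, the cone memberships hold uniformly on a short segment $S$ of the line $\mathbb{R}d$, and your certificate $\eta_1+\eta_2+\eta_3+\eta_4=0$ then pins $S$ inside the solution set. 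The main structural difference is that your ``decisive step'' --- the inequality $f(y)\ge-\sum_i\eta_i(p_i)$ obtained from functionals of dual norm one summing to zero --- is exactly the sufficiency half of the paper's Theorem~\ref{thm:ftpoint}, which you re-prove inline from duality rather than cite; likewise the constancy of $f$ on $S$ via cone memberships is an instance of Theorem~\ref{thm:ftlocus}. Quoting those results would compress your last two paragraphs to a few lines, at the cost of self-containedness. Your parity remark --- that a single flat face supplies only $\pm\phi$ and $\pm\psi_d$, so three points cannot in general be balanced --- is not idle: it is corroborated by the paper's main theorem, since $\lambda$-planes with $\lambda\not\equiv0\bmod 3$ are non-strictly convex yet have unique solutions for all three-point sets, so any correct converse construction must use at least four points, as yours does. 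One simplification: the final Hahn--Banach extension step is unnecessary if you choose $\phi$ from the start as a supporting functional of the unit ball at the midpoint $\frac{1}{2}(u+v)$ in the ambient space; the inequalities $\phi(u)\le1$ and $\phi(v)\le1$, averaging to $1$, already force $\phi\equiv1$ on the whole segment $[u,v]$.
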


The purpose of this work is to solve a problem with a weaker condition: we will look for norms on the plane in which the solution is unique for any three-point boundaries. In this case, new examples of norms appear that have the property of uniqueness of the solution for any set, for example, the Manhattan plane.

The second section presents the main definitions, and also describes the geometric method for finding a solution to the Fermat--Torricelli problem. Section 3 gives an answer to the question posed: a proof of the uniqueness criterion is given. The last part is devoted to applying the resulting criterion to the norms given by regular polygons.

I would like to express my gratitude to my scientific adviser, Doctor of Physical and Mathematical Sciences Professor A.A.Tuzhilin and Doctor of Physical and Mathematical Sciences Professor A.O.Ivanov for posing the problem and constant attention to the work.

\section{Basic definitions and preliminary results}

All statements in this section will be formulated for the Minkowski space, so this will not be specified.

\begin{defin}
A point $x_0$ is called a \emph{Fermat--Torricelli point} for points $A=\{x_1,\ldots,x_n\}$ if $x=x_0$ minimizes $\sum_{i=1}^n| xx_i|$. The set of all such points will be denoted by $\ft(A)$.
\end{defin}

From the properties of the function $\sum_{i=1}^n|xx_i|$, one can obtain the following assertion about the set of solutions of the Fermat--Torricelli problem, see for example \cite{bib10}.

\begin{pred}
Let $A = \{x_1$,\ldots, $x_n\}$ be points in space. Then $\ft(A)$ is a non-empty, compact and convex set.
\end{pred}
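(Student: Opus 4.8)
The plan is to study the objective function $f(x) = \sum_{i=1}^n |xx_i|$ directly and to read off all three properties of $\ft(A) = \operatorname{argmin} f$ from three elementary features of $f$: continuity, convexity, and coercivity (properness). Each term $x \mapsto |xx_i| = \|x - x_i\|$ is a composition of the norm with an affine map, hence continuous, so $f$ is continuous as a finite sum of continuous functions.

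For non-emptiness I would invoke the Weierstrass extreme value theorem, but $f$ is defined on the whole non-compact space, so I first need to confine the search to a compact set. This is where coercivity enters: by the reverse triangle inequality $f(x) \geq |xx_1| = \|x - x_1\| \geq \|x\| - \|x_1\| \to \infty$ as $\|x\| \to \infty$. Fixing the value $c = f(x_1)$, there is a radius $R$ such that $f(x) > c$ whenever $\|x\| > R$; consequently the global infimum of $f$ coincides with its minimum over the closed ball $\overline{B}(0,R)$, which is compact in the finite-dimensional Minkowski space. Since $f$ is continuous, it attains a minimal value $m$ there by Weierstrass, so $\ft(A) \neq \emptyset$.

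Convexity of $\ft(A)$ follows from convexity of $f$: each summand $\|x - x_i\|$ is convex because the norm is convex and precomposition with an affine map preserves convexity, and a finite sum of convex functions is convex. If $x, y \in \ft(A)$, so that $f(x) = f(y) = m$, then for $t \in [0,1]$ one has $f(tx + (1-t)y) \leq t f(x) + (1-t) f(y) = m$; since $m$ is the minimum this forces equality, whence $tx + (1-t)y \in \ft(A)$, proving $\ft(A)$ is convex.

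Finally, for compactness I would observe that $\ft(A) = \{x : f(x) = m\}$ is the preimage of the closed singleton $\{m\}$ under the continuous map $f$, hence closed; and it is bounded, being contained in the ball $\overline{B}(0,R)$ produced above by coercivity. In a finite-dimensional normed space closed and bounded sets are compact (Heine--Borel), so $\ft(A)$ is compact. None of the steps presents a genuine obstacle; the only point requiring care is the coercivity estimate, since it is precisely what upgrades the mere infimum to an attained minimum and simultaneously supplies the boundedness needed for compactness.
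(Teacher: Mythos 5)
Your proof is correct and complete; note that the paper itself gives no proof of this proposition, remarking only that it follows ``from the properties of the function $\sum_{i=1}^n|xx_i|$'' and deferring to the literature (Cieslik). Your argument --- continuity plus coercivity yielding non-emptiness via Weierstrass on a closed ball, convexity of the objective yielding convexity of the minimizer set, and closedness plus boundedness yielding compactness in the finite-dimensional setting --- is exactly the standard argument that remark alludes to, so it matches the intended approach while actually supplying the details.
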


We give two examples of solving the Fermat--Torricelli problem on a plane. The figure \ref{fig:pic5} shows the vertices of an equilateral triangle, first in the Euclidean plane, and then in the norm given by a regular hexagon. In the first case, the set of solutions contains a single point constructed in such a way that the angles between the rays coming out of it in the direction of the vertices of the triangle are equal. In the second case, we specify the location of the points of the given set: let one of them be at the origin, and the other two --- at neighboring vertices of the unit circle. Under such conditions, the set of solutions will include all points of the constructed triangle, including the interior and boundaries.

\ig{pic5}{0.21}{fig:pic5}{Examples of solutions to the Fermat--Torricelli problem on the Euclidean plane and on the $\lambda$-plane}

Now the geometric method for constructing the solution of the Fermat--Torricelli problem will be presented. To use it, along with the original space $X$, one must consider its dual space $X^*$, which consists of linear functionals.

\begin{defin}
A functional $\varphi\in X^*$ is called \emph{norming for a vector} $x\in X$ if $\|\varphi\|=1$ and $\varphi(x)=\|x\|$ .
\end{defin}

It is easy to see that the elements of the space $X$ obtained by multiplying the vector $x$ by a positive number $k$ have the same set of norming functionals. That is, the set of such functionals can be described using points of the unit sphere. Consider norming functionals on some normed plane.

The figure \ref{fig:pic6} shows a section of the unit circle $S$ containing both a flattened and a smooth section. Let us construct norming functionals for vectors starting at zero and ending at a point lying on $S$. At the point $z$ the unit circle has a single support line, then for the corresponding vector there is a unique norming functional $\varphi_4$, its level line coincides with this support line. Internal flattening points $xy$ also correspond to vectors with a unique norming functional. At the point $x$ the unit circle has more than one reference line. Each of them defines a norming functional, that is, for a vector with an end in $x$, there are infinitely many norming functionals. The functionals $\varphi_1$ and $\varphi_2$ are given by the limit positions of the reference line, while $\varphi_3$ is an arbitrary one.

\ig{pic6}{0.25}{fig:pic6}{Construction of norming functionals along the support lines to the unit circle}

The following theorem is a criterion for a certain point to belong to the set of solutions of the Fermat--Torricelli problem.

\begin{theorem}[\cite{bib11}, \cite{Extreme4}]\label{thm:ftpoint}
Let $x_0$, $x_1$, ..., $x_n$ be points in space and $x_0 \neq x_i$ for $i=1,...,n$. Then $x_0$ is a Fermat--Torricelli point for $A=\{x_1,...,x_n\}$ if and only if each vector $x_i-x_0$, $i=1,.. .,n$, has a norming functional $\varphi_i$ such that $\sum_{i=1}^n\varphi_i=0$.
\end{theorem}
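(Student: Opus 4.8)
The plan is to treat the objective $f(x)=\sum_{i=1}^{n}\|x-x_i\|$ as a convex, everywhere finite function on $X$ and to characterize its minimizers through norming functionals, which here play the role of its subdifferential. The two implications are of very different difficulty, so I would separate them and dispatch the easy one first.

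For the direction from the functionals to optimality, suppose each $x_i-x_0$ admits a norming functional $\varphi_i$ with $\sum_{i=1}^{n}\varphi_i=0$. For an arbitrary point $y$ I would apply the defining inequality $\|v\|\ge\varphi(v)$, valid for every functional of norm $1$; using $\varphi_i(x_i-x_0)=\|x_i-x_0\|$ this gives
\[
\|x_i-y\|\ge\varphi_i(x_i-y)=\|x_i-x_0\|+\varphi_i(x_0-y).
\]
Summing over $i$ and invoking $\sum_i\varphi_i=0$ makes the cross terms cancel, leaving $\sum_i\|x_i-y\|\ge\sum_i\|x_i-x_0\|$, that is, $x_0\in\ft(A)$.

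For the converse I would argue through directional derivatives. Since $f$ is convex and finite, $x_0$ is a minimizer if and only if $f'(x_0;h)\ge 0$ for every direction $h$. Writing $K_i$ for the nonempty, convex, compact set of norming functionals of $x_i-x_0$ (here I use $x_0\ne x_i$, so that we stay away from the kink of the norm at the origin), the standard support-function expression for the one-sided derivative of a norm at a nonzero vector yields
\[
f'(x_0;h)=-\sum_{i=1}^{n}\min_{\varphi_i\in K_i}\varphi_i(h)=-\min_{\varphi\in K}\varphi(h),\qquad K:=K_1+\cdots+K_n,
\]
the last equality holding because minimizing a sum over a product set decouples. Thus optimality of $x_0$ is precisely the statement that $\min_{\varphi\in K}\varphi(h)\le 0$ for all $h$, and the task reduces to deducing $0\in K$ from this, which is exactly the conclusion sought.

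This final implication is the crux, and I would obtain it by a separation argument. The set $K$ is convex and compact, being a finite Minkowski sum of convex compacts. If $0\notin K$, then in the finite-dimensional space $X^{*}$ the separating hyperplane theorem produces an element of the predual, i.e. a direction $h\in X$, together with a constant $c>0$ such that $\varphi(h)\ge c$ for all $\varphi\in K$; this forces $\min_{\varphi\in K}\varphi(h)\ge c>0$, contradicting the optimality condition. Hence $0\in K$, which means there are norming functionals $\varphi_i\in K_i$ of $x_i-x_0$ with $\sum_i\varphi_i=0$. The main obstacle I anticipate lies entirely in this necessity direction: justifying the support-function formula for the directional derivative of the norm at a nonzero vector (equivalently, identifying $K_i$ with the subdifferential of $\|\cdot-x_i\|$ at $x_0$) and handling the sum of subdifferentials, for which the compactness of each $K_i$ and the finite dimension of $X^{*}$ are exactly what make the separation step succeed.
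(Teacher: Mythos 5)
Your proof is correct, but there is nothing in the paper to compare it against: the paper does not prove this theorem at all, importing it as a known result with citations to Durier--Michelot and Ivanov--Tuzhilin and then using it as a black box. What you have reconstructed is essentially the standard convex-analytic argument from that cited literature. Both halves of your argument are sound: the sufficiency direction is the usual estimate $\|x_i-y\|\ge\varphi_i(x_i-y)=\|x_i-x_0\|+\varphi_i(x_0-y)$ followed by cancellation of the cross terms upon summing; the necessity direction correctly identifies the norming functionals of $x_i-x_0$ with minus the subdifferential of $\|\cdot-x_i\|$ at $x_0$ (this is precisely where the hypothesis $x_0\ne x_i$ is needed, to stay away from the kink of the norm at the origin, where the subdifferential is the whole dual unit ball rather than part of the dual unit sphere), uses additivity of one-sided directional derivatives of convex functions, and closes with compactness of the Minkowski sum $K_1+\cdots+K_n$ together with strict separation in the finite-dimensional dual, where the separating functional is evaluation at some $h\in X$ by reflexivity. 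One stylistic remark: your directional-derivative-plus-separation route amounts to a self-contained proof of the subdifferential sum rule $\partial f(x_0)=\partial g_1(x_0)+\cdots+\partial g_n(x_0)$ specialised to this situation; one could instead invoke the Moreau--Rockafellar theorem and the characterization $0\in\partial f(x_0)$ of minimizers directly, but your version is more elementary and makes visible exactly where finite-dimensionality and compactness are used.
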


Let us give the simplest example of using this theorem on the Euclidean plane. Let's consider three points on a circle, located at an equal distance from each other, and suppose that the origin of coordinates is the solution of the Fermat--Torricelli problem for them. For each of the points $x_1,x_2,x_3$ there is a unique support line, which is the level line $\varphi_i=1$ of some functional. The three constructed lines form an equilateral triangle and are equidistant from the origin, therefore, the sum of the functionals they define is equal to zero. By the \ref{thm:ftpoint} theorem, the point $p=0$ belongs to the set of solutions.

\ig{pic8}{0.27}{fig:pic8}{The origin of coordinates belongs to the solution set for points $x_1,x_2,x_3$}

Assume that a point is found that is a solution for the set $A$. Now, using the functionals from the \ref{thm:ftpoint} theorem, we can construct the entire set $\ft(A)$. To do this, we introduce a new object.

\begin{defin}
Let a functional $\varphi\in X^*$ and a point $x\in X$ be given. Define \emph{cone} $C(x,\;\varphi)=x-\bigl\{ a:\;\varphi(a)=\|a\|\bigr\}$.
\end{defin}

Let's consider two examples of constructing a cone on the Manhattan plane (figure \ref{fig:pic9}). Let the functional $\varphi_1$ be given by the support line intersecting the unit circle at one point, and $\varphi_2$ be the line containing the flattening. The set $\bigl\{ a:\;\varphi_1(a)=\|a\|\bigr\}$ is a ray that leaves the origin and passes through this point. For the second functional, this will be a whole set of rays intersecting all flattening points. In both cases, we then reflect the constructed set relative to the origin of coordinates and shift it by parallel translation so that the vertex hits the given point.

\ig{pic9}{0.28}{fig:pic9}{Construction of cones on the Manhattan plane}

\begin{theorem}[\cite{bib11}]\label{thm:ftlocus}
Let $A = \{x_1$, ..., $x_n\}$ be points in space and $p\in \ft(A)\setminus A$. By the \ref{thm:ftpoint} theorem, for each vector $x_i-p$, $i=1,...,n$, there exists a norming functional $\varphi_i$ such that $\sum_{i=1}^n \varphi_i=0$. Then $\ft(A)=\cap_{i=1}^n C(x_i,\;\varphi_i)$.
\end{theorem}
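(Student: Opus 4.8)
The plan is to reduce everything to one observation: the affine lower bound for the objective provided by the functionals $\varphi_i$ is attained exactly on the intersection of the cones. Writing $f(y)=\sum_{i=1}^n\|x_i-y\|$ for the objective (so $f(y)=\sum_{i=1}^n|yx_i|$), I would first note that a point $y$ lies in $C(x_i,\varphi_i)$ precisely when $\varphi_i(x_i-y)=\|x_i-y\|$, i.e. when $\varphi_i$ is a norming functional for the vector $x_i-y$. Indeed, unwinding $C(x_i,\varphi_i)=x_i-\bigl\{a:\varphi_i(a)=\|a\|\bigr\}$ via the substitution $a=x_i-y$ gives exactly this condition. Thus membership $y\in\bigcap_{i=1}^n C(x_i,\varphi_i)$ is equivalent to $\varphi_i(x_i-y)=\|x_i-y\|$ holding for every $i$.

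The key computation is that $\sum_{i=1}^n\varphi_i(x_i-y)$ does not depend on $y$. Using linearity and the hypothesis $\sum_{i=1}^n\varphi_i=0$,
\[
\sum_{i=1}^n\varphi_i(x_i-y)=\sum_{i=1}^n\varphi_i(x_i)-\Bigl(\sum_{i=1}^n\varphi_i\Bigr)(y)=\sum_{i=1}^n\varphi_i(x_i)=:c .
\]
Since each $\|\varphi_i\|=1$, we have $\varphi_i(x_i-y)\le\|x_i-y\|$ for all $y$, and summing yields $f(y)\ge c$ for every $y$; so $c$ is a global lower bound for the objective. Moreover, equality $f(y)=c$ forces equality in each of these $n$ inequalities, i.e. $\|x_i-y\|=\varphi_i(x_i-y)$ for all $i$, which by the previous paragraph is exactly membership in $\bigcap_{i=1}^n C(x_i,\varphi_i)$.

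It remains to identify $c$ with the minimum value of $f$, and this is where the given point $p$ enters: since $p\in\ft(A)\setminus A$ and each $\varphi_i$ was chosen to norm $x_i-p$, the point $p$ satisfies $\varphi_i(x_i-p)=\|x_i-p\|$ for all $i$, whence $f(p)=c$. As $p$ minimizes $f$, this shows $c=\min_y f(y)$. Chaining the three facts, $y\in\ft(A)$ iff $f(y)=c$ iff $\|x_i-y\|=\varphi_i(x_i-y)$ for all $i$ iff $y\in\bigcap_{i=1}^n C(x_i,\varphi_i)$, which is the desired equality.

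I expect no serious obstacle once the constancy of $\sum_{i=1}^n\varphi_i(x_i-y)$ is recognized; the only point to watch is the logical role of $p$, which serves solely to certify that the lower bound $c$ is actually attained (and hence is the true minimum). After that, the characterization of $\ft(A)$ follows purely from the equality conditions in $\varphi_i(x_i-y)\le\|x_i-y\|$, so one need not re-invoke Theorem \ref{thm:ftpoint} separately for each candidate $y$ (which also gracefully handles candidates $y$ coinciding with some $x_i$, where that theorem would not directly apply).
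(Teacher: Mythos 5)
Your proof is correct and complete. One point to flag first: the paper does not actually prove this theorem --- it is imported from Durier--Michelot \cite{bib11} and stated without proof --- so there is no internal argument to compare against; your proposal fills a gap the paper leaves to the literature. Your argument is the standard convex-duality certificate: membership in $C(x_i,\varphi_i)$ unwinds to the norming condition $\varphi_i(x_i-y)=\|x_i-y\|$; the sum $\sum_{i=1}^n\varphi_i(x_i-y)=c$ is constant in $y$ because $\sum_{i=1}^n\varphi_i=0$; hence $c$ is a global lower bound for the objective $f$, attained exactly on $\bigcap_{i=1}^n C(x_i,\varphi_i)$ (each term inequality $\varphi_i(x_i-y)\le\|x_i-y\|$ must be tight when the sum is tight); and $p$ certifies that $c$ equals the minimum. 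All steps check out, including the equivalence $f(y)=c\iff y\in\bigcap_i C(x_i,\varphi_i)$ in both directions. Two features of your route are worth noting: it handles candidates $y\in A$ uniformly (where Theorem \ref{thm:ftpoint} as stated would not apply, since it requires $x_0\neq x_i$; your inequality argument covers $x_i-y=0$ trivially), and it makes the paper's subsequent Remark transparent --- the right-hand side $\bigcap_{i=1}^n C(x_i,\varphi_i)$ equals $\ft(A)$ for \emph{any} admissible choice of $p$ and $\varphi_i$, so the construction is manifestly independent of those choices.
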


The \ref{thm:ftpoint} and \ref{thm:ftlocus} theorems constitute a geometric method for finding a solution to the Fermat--Torricelli problem. To describe the application of this theorem, let's consider in more detail the example with the vertices of an equilateral triangle in the hexagonal norm (figure \ref{fig:pic7}).

First, let's find at least one solution. Take $p=\frac{1}{3}(x_1+x_2+x_3)$ and use the \ref{thm:ftpoint} theorem to prove that $p\in\ft(A)$. To do this, consider the vectors $x_1-p,x_2-p,x_3-p$ and construct their norming functionals. Since they lie in the same directions with internal flattening points, then for each of the vectors $x_i-p$ there is exactly one functional $\varphi_i$ whose level line contains the corresponding flattening. The flattenings are equidistant from the origin and form an equilateral triangle; therefore, the sum of the functionals constructed from them is equal to zero, and the condition of the theorem is satisfied.

\ig{pic7}{0.27}{fig:pic7}{Construction of a complete set of solutions for points $x_1,x_2,x_3$ on a plane with hexagonal norm}

Now let's use the \ref{thm:ftlocus} theorem to find all solutions. Let us construct a cone given by the functional $\varphi_2$ and the vector $x_2-p$. Since the functional is norming for all flattening points, the set $\bigl\{ a:\;\varphi_2(a)=\|a\|\bigr\}$ is a set of rays emanating from the origin and passing through the flattening points , that is, an angle whose sides contain two adjacent vertices. Now we will reflect the angle relative to the origin and perform a parallel translation so that the vertex of the angle is at the point $x_2$. After a similar construction of two other cones, we obtain that their intersection is the entire triangle $x_1x_2x_3$.

\begin{remark}
The statement of \ref{thm:ftlocus} theorem does not depend on the choice of the point $p$ and the functionals $\varphi_{i}$.
\end{remark}

The geometric method gives a general description of the solutions of the Fermat--Torricelli problem for various given sets - this is the intersection of some cones with vertices located at the points of this set. In the case of a plane, this observation allows us to formulate the following statements:

\begin{pred}[\cite{FTproblem}]\label{thm:fttypes}
Let $A = \{x_1$, ..., $x_n\}$ be points on the plane. Then $\ft(A)$ is a convex polygon that can degenerate into a segment or a point.
\end{pred}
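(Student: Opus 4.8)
The plan is to realize $\ft(A)$ as an intersection of the cones introduced just before Theorem~\ref{thm:ftlocus}, and then to analyze the shape of each such cone in the plane. First I would dispose of the degenerate case $\ft(A)\subseteq A$: since $\ft(A)$ is convex and contained in the finite set $A$, it cannot contain two distinct points (otherwise it would contain the whole segment joining them, an infinite set), and since it is non-empty it must then be a single point --- one of the permitted degenerations. Hence I may assume there exists $p\in\ft(A)\setminus A$, and apply Theorem~\ref{thm:ftlocus} to write $\ft(A)=\bigcap_{i=1}^n C(x_i,\;\varphi_i)$ for suitable norming functionals $\varphi_i$ with $\sum_{i=1}^n\varphi_i=0$.

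The heart of the argument is to determine the shape of a single cone. For a norming functional $\varphi$ (so $\|\varphi\|=1$) I would study the set $K_\varphi=\bigl\{a:\;\varphi(a)=\|a\|\bigr\}$. It is closed under multiplication by nonnegative scalars, and it is convex: if $\varphi(a)=\|a\|$ and $\varphi(b)=\|b\|$, then $\varphi(a+b)=\|a\|+\|b\|\geq\|a+b\|\geq\varphi(a+b)$, forcing equality throughout, so $a+b\in K_\varphi$. Intersecting with the unit sphere $S$, the set $K_\varphi\cap S$ is exactly the contact set of the supporting line $\{\varphi=1\}$ with the unit ball; being a compact convex subset of that line, it is either a single point or a segment. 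Consequently $K_\varphi$ is either a single ray from the origin or the angular sector spanned by the endpoints of that segment, and since the segment lies on a line missing the origin, this sector has opening angle strictly less than $\pi$. Reflecting in the origin and translating the apex to $x_i$ shows that each $C(x_i,\;\varphi_i)$ is again a ray or a proper angular sector, i.e.\ the intersection of at most two closed half-planes.

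It then follows that $\ft(A)=\bigcap_{i=1}^n C(x_i,\;\varphi_i)$ is an intersection of at most $2n$ closed half-planes, hence a polyhedral convex set with at most $2n$ sides. Invoking once more the fact that $\ft(A)$ is compact, this polyhedral set is bounded, and a bounded polyhedral convex set in the plane is precisely a convex polygon, possibly degenerating to a segment or a point. This yields the claim.

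I expect the main obstacle to be the second paragraph: correctly identifying $K_\varphi$ as a face of the unit ball and verifying that the cone over it is proper, with opening angle strictly below $\pi$. The convexity argument via the triangle inequality is short but indispensable, as it is what guarantees the half-plane description on which the polygon conclusion rests; meanwhile the boundedness needed to upgrade ``polyhedral set'' to ``polygon'' comes entirely from the earlier compactness result rather than from the cone analysis itself.
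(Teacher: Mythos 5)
Your proof is correct, and it follows the route the paper itself intends: the paper states this proposition without proof (it is quoted from \cite{FTproblem}), and the sentence immediately preceding it --- that the geometric method exhibits $\ft(A)$ as an intersection of cones with apexes at the given points --- is exactly the argument you develop. What you add, and what the paper glosses over, are the three points that genuinely need checking: the degenerate case $\ft(A)\subseteq A$, which must be treated separately because Theorem~\ref{thm:ftlocus} only applies to a point $p\in\ft(A)\setminus A$; the classification of $K_\varphi=\bigl\{a:\;\varphi(a)=\|a\|\bigr\}$ as the convex cone over the contact set of a supporting line, hence a ray or a sector of opening angle strictly less than $\pi$; and the appeal to compactness of $\ft(A)$ to upgrade ``intersection of finitely many closed half-planes'' to ``convex polygon''. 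All three are handled correctly. One cosmetic slip: a ray in the plane is the intersection of three closed half-planes (two cutting out its line, plus one more), not two, so your half-plane count should be $3n$ rather than $2n$; this has no effect on polyhedrality or on the conclusion.
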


\begin{pred}[\cite{FTproblem}]\label{thm:oddset}
Let in space the points of the set $A = \{x_1$, ..., $x_{2k+1}\}$ be located on one straight line in the order of their numbering. Then $\ft(A)=\{x_{k+1}\}$.
\end{pred}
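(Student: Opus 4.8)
The plan is to bypass the norming-functional criterion of Theorem~\ref{thm:ftpoint} and argue directly from the triangle inequality, since that criterion is stated only for a candidate point distinct from all the $x_i$, whereas here the expected solution $x_{k+1}$ is itself one of the given points. Write $f(x)=\sum_{i=1}^{2k+1}\|x-x_i\|$ and group the summands by pairing each $x_j$ with its mirror $x_{2k+2-j}$ about the centre, leaving $x_{k+1}$ unpaired:
\[
f(x)=\sum_{j=1}^{k}\bigl(\|x-x_j\|+\|x-x_{2k+2-j}\|\bigr)+\|x-x_{k+1}\|.
\]
First I would record the elementary betweenness fact: since the points lie on one line in the order of their numbering, for every $j\le k$ we have $j<k+1<2k+2-j$, so $x_{k+1}$ lies strictly between $x_j$ and $x_{2k+2-j}$. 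Writing the common direction of the line as $v$, the vectors $x_{k+1}-x_j$ and $x_{2k+2-j}-x_{k+1}$ are positive multiples of $v$, whence
\[
\|x_{k+1}-x_j\|+\|x_{k+1}-x_{2k+2-j}\|=\|x_j-x_{2k+2-j}\|,
\]
an equality valid in every normed space.

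Next, for an arbitrary point $x$ the triangle inequality gives for each pair the lower bound
\[
\|x-x_j\|+\|x-x_{2k+2-j}\|\ge\|x_j-x_{2k+2-j}\|,
\]
the right-hand side being a constant independent of $x$. Summing these $k$ inequalities and using the betweenness identity to recognise the same right-hand sum as the value attained at $x_{k+1}$, I obtain
\[
\sum_{j=1}^{k}\bigl(\|x-x_j\|+\|x-x_{2k+2-j}\|\bigr)\ \ge\ \sum_{j=1}^{k}\|x_j-x_{2k+2-j}\|\ =\ \sum_{j=1}^{k}\bigl(\|x_{k+1}-x_j\|+\|x_{k+1}-x_{2k+2-j}\|\bigr),
\]
so the paired part of $f$ is minimised at $x_{k+1}$.

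Finally I would exploit the leftover term $\|x-x_{k+1}\|$, which is nonnegative and vanishes exactly when $x=x_{k+1}$. Since $f(x_{k+1})$ equals precisely the constant $\sum_{j=1}^{k}\|x_j-x_{2k+2-j}\|$, combining the two estimates yields $f(x)\ge f(x_{k+1})+\|x-x_{k+1}\|\ge f(x_{k+1})$, with the last inequality strict whenever $x\ne x_{k+1}$. This simultaneously proves $x_{k+1}\in\ft(A)$ and the uniqueness $\ft(A)=\{x_{k+1}\}$. The only point that needs care — and the closest thing to an obstacle — is handling the two regimes of the pairing cleanly: the constancy of each pairwise lower bound (so that the $\ge$ survives summation) together with the collinear betweenness that turns it into equality at the centre. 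Everything else is bookkeeping, and no appeal to strict convexity is needed, because the unpaired central distance $\|x-x_{k+1}\|$ alone supplies the strictness that forces uniqueness.
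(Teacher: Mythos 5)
Your argument is correct, and in fact there is nothing in the paper to compare it against: Proposition~\ref{thm:oddset} is stated there with a citation to \cite{FTproblem} and no proof is given, so your proposal supplies an argument the paper omits. Your route is the standard elementary one for the median property, and it is well suited to this setting: you split $f(x)=\sum_i\|x-x_i\|$ into the $k$ mirror pairs $(x_j,x_{2k+2-j})$ plus the unpaired central term, bound each pair below by the constant $\|x_j-x_{2k+2-j}\|$ via the triangle inequality, observe that collinear betweenness turns every such bound into an equality at $x=x_{k+1}$, and let the leftover term $\|x-x_{k+1}\|$ produce the strict inequality $f(x)\ge f(x_{k+1})+\|x-x_{k+1}\|>f(x_{k+1})$ for $x\neq x_{k+1}$, which gives existence and uniqueness simultaneously. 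This buys several things that the paper's general machinery does not: it works in any dimension and for any norm (no strict convexity, no smoothness), and it correctly sidesteps Theorem~\ref{thm:ftpoint}, whose hypothesis $x_0\neq x_i$ excludes precisely the point $x_{k+1}$ you need to certify --- a subtlety you were right to flag explicitly. Two minor observations: the strictness of the betweenness ($j<k+1<2k+2-j$) is not actually needed, since weak betweenness already yields the additivity $\|x_{k+1}-x_j\|+\|x_{k+1}-x_{2k+2-j}\|=\|x_j-x_{2k+2-j}\|$, and the unpaired term alone carries all the strictness; and the same pairing argument shows that for an even number $2k$ of collinear points the solution set is the whole segment $[x_k,x_{k+1}]$, which explains why the proposition is stated for an odd number of points.
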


\section{Uniqueness criterion}

Let a set of three points $A = \{x_1,x_2,x_3\}$ be given on a normed plane.
Answering the question what form the set of solutions of the Fermat--Torricelli $\ft(A)$ problem has under these conditions, we can get three cases: a non-degenerate convex polygon, a segment, and a point.
To formulate a uniqueness criterion, we solve the inverse problem: find the necessary and sufficient conditions for $\ft(A)$ to be a polygon or a segment.

The \ref{thm:ftlocus} theorem says that the set $\ft(A)$ is always the intersection of three cones that are angles or rays on the plane, i.e. $$\ft(A)=C(x_1 ,\;\varphi_1)\cap C(x_2,\;\varphi_2)\cap C(x_3,\;\varphi_3)$$ for some $\varphi_i$ functionals.

\ig{pic1}{0.18}{fig:pic1}{The solution of the Fermat--Torricelli problem is the intersection of cones emerging from points of a given set}

\begin{pred}\label{thm:polygon}
If $\ft(A)$ is a non-degenerate polygon, then the cones $C_i$ are non-degenerate angles.
\end{pred}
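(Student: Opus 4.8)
The plan is to prove the contrapositive by a dimension count. By Theorem~\ref{thm:ftlocus} I may write $\ft(A)=C_1\cap C_2\cap C_3$ with $C_i=C(x_i,\varphi_i)$, so in particular $\ft(A)\subseteq C_i$ for each $i$. The whole argument then rests on the observation that a non-degenerate polygon is two-dimensional (it has nonempty interior in the plane), whereas a degenerate cone is only one-dimensional; an intersection cannot acquire more dimensions than the sets it intersects.

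The substantive step I would isolate first is the shape dichotomy for a single cone. Recall that $\{a:\varphi_i(a)=\|a\|\}$ is exactly the positive cone spanned by the contact set of the supporting line $\{\varphi_i=1\}$ with the unit circle $S$, as described in the discussion around Figures~\ref{fig:pic6} and \ref{fig:pic9}. Since $S$ is a convex, centrally symmetric closed curve, a supporting line meets it either in a single point or along a flat segment: in the first case the spanned set is a single ray, in the second a genuine two-dimensional angular sector. Forming $C(x_i,\varphi_i)$ only reflects this set through the origin and translates its vertex to $x_i$, operations that preserve dimension. Hence each $C_i$ is either a ray (the degenerate case) or a non-degenerate angle, with no intermediate possibility. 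I would also note that central symmetry rules out the cone being a half-plane or the whole plane: if both $a$ and a nonzero negative multiple of $a$ satisfied $\varphi_i(\,\cdot\,)=\|\,\cdot\,\|$, positivity of the norm would be violated, so the opening is strictly less than $\pi$.

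With the dichotomy in hand the conclusion is immediate. Suppose, contrary to the claim, that some cone $C_j$ is degenerate, i.e.\ a ray. Then $\ft(A)\subseteq C_j$ is a subset of a ray, so by Proposition~\ref{thm:fttypes} (where $\ft(A)$ is in any case convex) it can only be a segment or a single point; in either case it has empty interior and is not a non-degenerate polygon, contradicting the hypothesis. Therefore every $C_i$ is a non-degenerate angle. I expect the only place requiring genuine care to be the cone dichotomy of the second paragraph --- cleanly excluding the empty, half-plane, and full-plane degeneracies so that ``degenerate'' means precisely ``ray.'' Once that is settled, the dimension-counting step is purely formal, which is why the proof should be short rather than computational.
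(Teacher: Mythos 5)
Your proof is correct and takes essentially the same route as the paper: the paper's entire argument is that if some cone is a ray, then $\ft(A)$, being contained in that ray, cannot be a non-degenerate polygon. Your second paragraph (the ray/angle dichotomy for a single cone, excluding half-planes via central symmetry) is a careful elaboration of what the paper leaves implicit, not a different approach.
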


\begin{proof}
In the case when at least one of the cones is a ray, the set $\ft(A)$ is a subset of this ray and cannot be a non-degenerate polygon.
\end{proof}

\begin{pred}\label{thm:segment}
If $\ft(A)$ is a non-degenerate segment, then at least one of the cones $C_i$ is a non-degenerate angle.
\end{pred}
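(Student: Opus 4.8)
The plan is to prove the contrapositive of the underlying implication: assuming $\ft(A)$ is a non-degenerate segment, I suppose toward a contradiction that \emph{none} of the cones is a non-degenerate angle, i.e. all three of $C_1,C_2,C_3$ are rays, and I aim to force the three given points to be collinear — at which point Proposition \ref{thm:oddset} finishes the job.

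First I would pin down the geometry of a cone that is a ray. If $\varphi_i$ is norming for only one direction, then $\{a:\varphi_i(a)=\|a\|\}$ is a single ray from the origin through the unique support point $e_i$, so $C_i=C(x_i,\varphi_i)=\{x_i-te_i:t\ge 0\}$ is a ray whose vertex is $x_i$. In particular $x_i$ lies on the line $\ell_i$ carrying $C_i$.

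The central step is a dimension count. By Theorem \ref{thm:ftlocus} we may write $\ft(A)=C_1\cap C_2\cap C_3$ (choosing $p\in\ft(A)\setminus A$, which exists because a non-degenerate segment is infinite while $A$ is finite, so the theorem genuinely applies). The segment $\sigma:=\ft(A)$ is one-dimensional and contained in each ray $C_i\subseteq\ell_i$; since any sub-segment already determines a line, every $\ell_i$ coincides with the line $L$ spanned by $\sigma$. Combined with the previous step, $x_1,x_2,x_3\in L$, so the three given points are collinear. Relabelling them in the order in which they appear along $L$ and applying Proposition \ref{thm:oddset} with $2k+1=3$ yields $\ft(A)=\{x_2\}$, a single point, which contradicts that $\ft(A)$ is a non-degenerate segment. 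Hence at least one $C_i$ must be a non-degenerate angle.

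The main obstacle is exactly this dimension count: I must exclude the possibility that a one-dimensional intersection is produced by rays in general position rather than by rays sharing a common line. This is settled by the elementary observation that two rays lying on distinct lines meet in at most one point, so a one-dimensional intersection forces a single carrier line $L$; everything else is bookkeeping together with the appeal to Proposition \ref{thm:oddset}.
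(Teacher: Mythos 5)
Your proof is correct and takes essentially the same route as the paper's: assume all three cones are rays, observe that a non-degenerate segment in their intersection forces all three rays onto a single carrier line, conclude the points $x_i$ are collinear, and invoke Proposition \ref{thm:oddset} to get a singleton, contradicting non-degeneracy. Your write-up merely makes explicit the details the paper leaves implicit (that the vertex $x_i$ lies on the ray's carrier line, and the two-rays-on-distinct-lines-meet-in-at-most-one-point observation behind the common-line claim).
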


\begin{proof}
If all three cones are rays, then their intersection can be a segment only if they all lie on the same straight line. In this case the points $x_i$ lie on the same line, and by the proposition \ref{thm:oddset} the set $\ft(A)$ coincides with one of these points. Contradiction.
\end{proof}

The statements \ref{thm:polygon} and \ref{thm:segment} formulate the necessary conditions, now let's move on to the conditions on the norm, which ensure the existence of a set of points with a non-unique solution.

\begin{defin}
We will say that the unit circle on the normed plane consists of \emph{elements of zero type}, which are points that do not lie inside flattenings, and \emph{elements of the first type}, which are internal points of flattenings.
\end{defin}

\begin{defin}\label{thm:good3}
Take three elements of the unit circle. For each of them, we choose a support line so that for an element of the zero type it intersects the unit circle only at one point, that is, in the element itself. Using the chosen support lines, we construct functionals whose level lines $\varphi_i=1$ coincide with these lines. If there is a set of support lines such that the sum of the constructed functionals is equal to zero, then we will call such a triple \emph{consistent}.
\end{defin}

\begin{pred}\label{thm:1cond}
Let some norm be given on the plane. If there is a consistent triple of elements of the unit circle consisting of three elements of the first type, then in this plane there are three points $x_1,x_2,x_3$ for which the set of solutions $\ft(x_1,x_2,x_3)$ is a non-degenerate polygon .
\end{pred}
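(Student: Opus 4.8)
The plan is to realize the consistent triple as the directions of three points for which the origin is a Fermat--Torricelli point, and then to show directly that the resulting solution set has nonempty interior. Let $e_1,e_2,e_3$ be the three elements of the first type forming the consistent triple. Since each $e_i$ is an interior point of a flattening, the unit circle is locally a straight segment at $e_i$, so the \emph{only} support line there is the line containing that flattening; hence the norming functional $\varphi_i$ whose level line $\varphi_i=1$ contains this flattening is uniquely determined, and consistency gives $\varphi_1+\varphi_2+\varphi_3=0$. I would then set $x_i=e_i$ and $p=0$.

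First I would check that $p=0\in\ft(A)$ for $A=\{x_1,x_2,x_3\}$. Each vector $x_i-p=e_i$ has $\varphi_i$ as a norming functional, since $\|\varphi_i\|=1$ and $\varphi_i(e_i)=\|e_i\|=1$, and $\sum_{i=1}^3\varphi_i=0$, so Theorem \ref{thm:ftpoint} applies. Consequently Theorem \ref{thm:ftlocus} yields
\[
\ft(A)=C(x_1,\varphi_1)\cap C(x_2,\varphi_2)\cap C(x_3,\varphi_3),
\]
an intersection of three cones $C(x_i,\varphi_i)=x_i-K_i$ with $K_i=\bigl\{a:\varphi_i(a)=\|a\|\bigr\}$.

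The heart of the argument is the local structure of each $K_i$. The set $K_i$ is exactly the cone of all positive multiples of the vectors for which $\varphi_i$ is norming, that is, the cone spanned by the flattening $F_i=\{x:\|x\|=1,\;\varphi_i(x)=1\}$. Because $F_i$ is a genuine, non-degenerate segment, $K_i$ is a non-degenerate angle, and its two-dimensional interior is precisely the cone over the relative interior of $F_i$. Since $e_i$ lies in the relative interior of $F_i$ by the first-type hypothesis, $x_i=e_i$ lies in $\operatorname{int} K_i$; applying the affine reflection $a\mapsto x_i-a$, which carries interior to interior, I conclude that $0=x_i-x_i$ lies in $\operatorname{int} C(x_i,\varphi_i)$. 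As this holds for every $i$, the point $0$ is interior to all three cones, hence interior to $\ft(A)$. Therefore $\ft(A)$ is two-dimensional, and by Proposition \ref{thm:fttypes} it is a convex polygon; being two-dimensional, it is non-degenerate, as required.

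The main obstacle --- and the only place where the full strength of the hypothesis is used --- is this last step: one must see that being an \emph{interior} point of a genuine flattening simultaneously forces each cone $K_i$ to be a non-degenerate angle rather than a ray, and forces the apex image $0$ to lie strictly inside it rather than on its boundary. Were any $e_i$ of zero type, a smooth point or a flattening endpoint, the corresponding cone could collapse to a ray or place $0$ on its boundary, and the intersection would drop in dimension. I expect the only routine care to be in justifying that the relative interior of $F_i$ maps exactly onto the two-dimensional interior of the cone $K_i$; everything else follows formally from Theorems \ref{thm:ftpoint} and \ref{thm:ftlocus} together with Proposition \ref{thm:fttypes}.
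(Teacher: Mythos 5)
Your proposal is correct and follows essentially the same route as the paper's proof: take the triple itself as the point set, use Theorems \ref{thm:ftpoint} and \ref{thm:ftlocus} to write $\ft(A)$ as the intersection of the three cones, and observe that each cone is a non-degenerate angle containing the origin in its interior precisely because each $x_i$ is an \emph{interior} point of its flattening. Your extra care in checking that the relative interior of the flattening maps onto the interior of the cone (so that $0\in\operatorname{int}C(x_i,\varphi_i)$) is just a more explicit rendering of the paper's remark that the points are not flattening boundaries.
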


\begin{proof}
Let $x_1,x_2,x_3$ be elements of this triple. Then, taking the functionals $\varphi_1, \varphi_2, \varphi_3$ from the definition \ref{thm:good3}, by the theorem \ref{thm:ftpoint} we get that $p=0$ belongs to $\ft(x_1, x_2, x_3 )$. By the \ref{thm:ftlocus} theorem, $\ft(x_1, x_2, x_3)$ is the intersection of the cones coming out of the points $x_1, x_2, x_3$. Since all functionals contain flattenings, all three cones are non-degenerate angles. Moreover, each of them contains some neighborhood of zero due to the fact that the points $x_1, x_2, x_3$ are not flattening boundaries. Therefore, the cones at the intersection form a polygon.
\end{proof}

\ig{pic2}{0.32}{fig:pic2}{The first non-uniqueness condition}

\begin{pred}\label{thm:2cond}
Let some norm be given on the plane. If there is a consistent triple of elements of the unit circle, consisting of two elements of the first type and one element of the zero type, then in this plane there are three points $x_1,x_2,x_3$ for which the set of solutions $\ft(x_1,x_2,x_3)$ --- non-degenerate segment.
\end{pred}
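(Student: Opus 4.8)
The plan is to mimic the proof of Proposition \ref{thm:1cond}, tracking how the single zero-type element changes the shape of the intersection from a polygon into a segment. I would label the triple so that $x_1,x_2$ are the two elements of the first type and $x_3$ is the element of the zero type, and place all three on the unit circle. Taking the functionals $\varphi_1,\varphi_2,\varphi_3$ supplied by Definition \ref{thm:good3}, their sum vanishes, so Theorem \ref{thm:ftpoint} gives $p=0\in\ft(x_1,x_2,x_3)$. Theorem \ref{thm:ftlocus} then expresses the solution set as the intersection $C(x_1,\varphi_1)\cap C(x_2,\varphi_2)\cap C(x_3,\varphi_3)$, and the whole argument reduces to identifying these three cones from the types of their generating elements.

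Next I would compute each cone. For $i=1,2$ the element $x_i$ is an interior point of a flattening, so $\bigl\{a:\varphi_i(a)=\|a\|\bigr\}$ is the full angle spanned by the rays from the origin through that flattening; hence $C(x_i,\varphi_i)$ is a non-degenerate angle, and since $x_i$ lies strictly inside the flattening the origin $0=x_i-x_i$ is an interior point of it. Thus $C(x_1,\varphi_1)\cap C(x_2,\varphi_2)$ contains an open neighborhood of $0$, exactly as in Proposition \ref{thm:1cond}. For $x_3$, the chosen support line meets the unit circle only at $x_3$, so $\bigl\{a:\varphi_3(a)=\|a\|\bigr\}$ is the single ray through $x_3$; consequently $C(x_3,\varphi_3)=x_3-\bigl\{t x_3:t\ge 0\bigr\}$ is a one-dimensional half-line passing through $0$ (at $t=1$).

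Finally I would intersect the ray with the neighborhood. Because $0$ is interior to $C(x_1,\varphi_1)\cap C(x_2,\varphi_2)$ and the half-line $C(x_3,\varphi_3)$ passes through $0$ with $0$ in its relative interior, an initial piece of the ray on both sides of $0$ lies in the intersection, so $\ft(A)$ contains more than one point. On the other hand $\ft(A)$ is contained in the one-dimensional set $C(x_3,\varphi_3)$, so it cannot be a two-dimensional polygon, and by Proposition \ref{thm:fttypes} (equivalently, by compactness of the solution set) it is bounded. A bounded convex subset of a line containing more than one point is precisely a non-degenerate segment, which is the assertion.

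I expect the only delicate point to be this last dichotomy: one must argue simultaneously that the intersection is not a single point and not an unbounded ray. Non-degeneracy rests on $0$ being an \emph{interior} point of both first-type cones, which in turn uses that $x_1,x_2$ are strictly inside their flattenings (their being flattening boundaries would be the failure mode); boundedness I would obtain from the compactness of $\ft(A)$ recorded in Section 2 and reflected in Proposition \ref{thm:fttypes}. Everything else is a direct transcription of the cone computation already carried out for Proposition \ref{thm:1cond}.
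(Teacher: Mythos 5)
Your proposal is correct and follows essentially the same route as the paper: use Definition \ref{thm:good3} and Theorem \ref{thm:ftpoint} to get $p=0\in\ft(A)$, then apply Theorem \ref{thm:ftlocus} and observe that the two first-type cones contain a neighborhood of the origin while the zero-type cone is a ray through the origin, so the intersection is a non-degenerate segment. Your version is in fact slightly more careful than the paper's, since you explicitly justify boundedness via compactness of $\ft(A)$ and verify that $0$ lies in the relative interior of the ray, points the paper leaves implicit.
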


\begin{proof}
Let the points $x_1,x_2$ be elements of the first type, and $x_3$ be the zero type from the given triple. Similarly to the previous proof, we get that $p=0$ belongs to $\ft(x_1,x_2,x_3)$. The cones emerging from $x_1,x_2$ contain a neighborhood of zero, and a ray passes from the point $x_3$ and passes through the origin. Consequently, at the intersection we obtain a non-degenerate segment.
\end{proof}

The following condition ensures the existence in a given norm of three points for which the set of solutions is the intersection of one non-degenerate angle and two rays.

\begin{pred}\label{thm:3cond}
Let some norm be given on the plane. If there is a consistent triple of elements of the unit circle, consisting of one element of the first type and two elements of the zero type, and the elements of the zero type are located symmetrically with respect to the origin, then in this plane there are three points $x_1,x_2,x_3$, for which the set of solutions $ \ft(x_1,x_2,x_3)$ is a non-degenerate segment.
\end{pred}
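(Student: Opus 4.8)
The plan is to follow the template of the proofs of Propositions \ref{thm:1cond} and \ref{thm:2cond}, the only genuinely new ingredient being the symmetry hypothesis, which I expect to be exactly what forces the intersection to be a \emph{non-degenerate} segment rather than collapsing to a single point. First I would fix notation: let $x_1$ be the element of the first type and let $x_2,x_3$ be the two elements of the zero type, so that the symmetry assumption reads $x_3=-x_2$. Taking the functionals $\varphi_1,\varphi_2,\varphi_3$ supplied by Definition \ref{thm:good3}, for which $\varphi_1+\varphi_2+\varphi_3=0$, Theorem \ref{thm:ftpoint} immediately gives $p=0\in\ft(x_1,x_2,x_3)$, and then Theorem \ref{thm:ftlocus} lets me write $\ft(A)=C(x_1,\varphi_1)\cap C(x_2,\varphi_2)\cap C(x_3,\varphi_3)$.

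Next I would compute the two cones coming from the zero-type elements. Since the support line chosen for $x_2$ meets the unit circle only at $x_2$, the set $\{a:\varphi_2(a)=\|a\|\}$ is precisely the ray $\{tx_2:t\ge 0\}$, so by the definition of the cone $C(x_2,\varphi_2)=\{sx_2:s\le 1\}$ is the ray issuing from $x_2$, passing through the origin and continuing in the direction $-x_2$. Here is where the symmetry enters: because $x_3=-x_2$, the analogous computation gives $C(x_3,\varphi_3)=\{sx_2:s\ge -1\}$, the ray from $-x_2$ through the origin in the direction $+x_2$. These two rays are collinear and oppositely oriented, so their intersection is exactly the segment $\{sx_2:-1\le s\le 1\}$ joining $-x_2$ to $x_2$, which is non-degenerate and has the origin as an interior point.

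I regard this second step as the crux of the argument, and its main obstacle. Verifying that the two rays overlap in a full segment rather than meeting only at the origin is precisely the point at which the hypothesis of symmetric placement of the zero-type elements cannot be dropped; without it the two rays would in general lie on distinct lines through the origin and meet only at $0$, giving a point rather than a segment.

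Finally I would handle the cone from $x_1$. As in the proof of Proposition \ref{thm:1cond}, since $x_1$ is an interior point of a flattening, $C(x_1,\varphi_1)$ is a non-degenerate angle whose interior contains the origin, hence a whole neighborhood of $0$. Intersecting this neighborhood with the segment joining $-x_2$ and $x_2$, on which $0$ is an interior point, yields a non-degenerate subsegment about the origin, so $\ft(A)$ contains more than one point. Since $\ft(A)$ is contained in the line through $0$ and $x_2$, it cannot be a two-dimensional polygon, and being convex and containing a neighborhood of $0$ within that line it is not a single point either; therefore $\ft(A)$ is a non-degenerate segment, as claimed.
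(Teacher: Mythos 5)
Your proof is correct and follows essentially the same route as the paper: both establish $0\in\ft(x_1,x_2,x_3)$ by applying Theorem \ref{thm:ftpoint} to the functionals of the consistent triple, and then use Theorem \ref{thm:ftlocus} to identify the solution set as the intersection of two oppositely directed collinear rays (coming from the symmetric zero-type elements) with a non-degenerate angle that contains a neighborhood of the origin. Your write-up simply makes explicit the cone computations and the non-degeneracy check that the paper's terser proof leaves implicit.
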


\begin{proof}
Let the points $x_1,x_2$ be elements of the zero type, and $x_3$ the first type from the given triple. The functionals $\varphi_1,\varphi_2,\varphi_3$ from the definition of \ref{thm:good3} satisfy the \ref{thm:ftpoint} theorem, and the point $x=0$ belongs to $\ft(x_1,x_2,x_3)$. By the \ref{thm:ftlocus} theorem, the solution $\ft(x_1,x_2,x_3)$ is the intersection of two rays lying on the same straight line containing elements of type zero and a non-degenerate angle coming out of the flattening. We get a segment.
\end{proof}

\ig{pic3}{0.21}{fig:pic3}{The third non-uniqueness condition}

\begin{defin}
We will say that \emph{the first, second or third non-uniqueness condition} is satisfied for a norm on the plane if it satisfies the condition of the proposition \ref{thm:1cond}, \ref{thm:2cond} or \ref{thm:3cond} respectively .
\end{defin}

\begin{theorem}\label{thm:criteria}
The solution of the Fermat--Torricelli problem is unique for any three points in a normed plane if and only if none of the non-uniqueness conditions is satisfied for the given norm.
\end{theorem}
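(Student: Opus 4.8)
The statement is an equivalence whose two implications are very different in character. The implication ``uniqueness for every triple $\Rightarrow$ none of the three conditions holds'' is merely the contrapositive of Propositions \ref{thm:1cond}, \ref{thm:2cond} and \ref{thm:3cond} taken together: each of them manufactures, from a consistent triple of the appropriate type, a concrete three-point set whose solution set is a non-degenerate polygon or segment. All the content therefore lies in the converse, which I would prove in contrapositive form: \emph{if $\ft(A)$ fails to be a single point for some $A=\{x_1,x_2,x_3\}$, then one of the three conditions holds.} By Proposition \ref{thm:fttypes} such an $\ft(A)$ is either a non-degenerate polygon or a non-degenerate segment, and I would treat these cases in turn.

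In the polygon case I would choose $p$ in the interior of $\ft(A)$ (permissible by the Remark after Theorem \ref{thm:ftlocus}). Theorem \ref{thm:ftpoint} then supplies norming functionals $\varphi_i$ of the vectors $x_i-p$ with $\sum_i\varphi_i=0$, and Theorem \ref{thm:ftlocus} gives $\ft(A)=\bigcap_i C(x_i,\varphi_i)$. Since $p$ is interior to the intersection it is interior to each cone; a cone equal to a ray has empty interior, so every cone is a non-degenerate angle, and $p$ lying in the interior of $C(x_i,\varphi_i)$ forces the unit vector $u_i=(x_i-p)/\|x_i-p\|$ into the interior of a flattening, i.e. to be an element of the first type. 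Hence $(u_1,u_2,u_3)$ together with the $\varphi_i$ is a consistent triple of three first-type elements and the first condition holds.

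In the segment case I would take $p$ in the relative interior of the segment and form $\ft(A)=\bigcap_i C(x_i,\varphi_i)$ as before. The engine of the argument is to pass to tangent cones at $p$: the tangent cone of the convex set $\ft(A)$ at an interior point of the segment is the whole line $\ell$ carrying it, and it equals the intersection of the tangent cones $T_i$ of the $C_i$ at $p$. Each $T_i$ is a plane (if $C_i$ is an angle and $u_i$ is interior to a flattening), a half-plane (if $C_i$ is an angle but $u_i$ is an endpoint of the flattening, so $p$ lies on a bounding ray), or a line (if $C_i$ is a ray). The demand $\bigcap_i T_i=\ell$ forces every ray-line and every half-plane boundary to equal $\ell$; in particular, were all three cones rays the $u_i$ would all be parallel to $\ell$, the points $x_i$ collinear, and Proposition \ref{thm:oddset} would collapse $\ft(A)$ to a point --- this is exactly Proposition \ref{thm:segment}. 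Thus the number $a$ of angle-cones satisfies $a\in\{1,2,3\}$.

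It remains to read off a condition from $a$. The crucial point is that a functional supporting a flattening is norming for \emph{every} point of that flattening, so without altering $\varphi_i$ or the relation $\sum_i\varphi_i=0$ I may move the element belonging to each angle-cone to an interior point of its flattening, making it first-type, while each ray-cone keeps a zero-type element (a corner or smooth point admits only support lines meeting the circle in one point). This produces, in the sense of Definition \ref{thm:good3}, a consistent triple with $a$ first-type and $3-a$ zero-type elements, giving the first, second, or third condition for $a=3,2,1$ respectively. For $a=1$ the two zero-type elements are rays with line $\ell$, so their unit vectors are both parallel to the direction of $\ell$; they cannot coincide, for otherwise the two functionals $\varphi_i,\varphi_j$ would both be norming for this common vector $v$, whence $(\varphi_i+\varphi_j)(v)=2\|v\|$ and $\|\varphi_i+\varphi_j\|=2$, contradicting $\varphi_i+\varphi_j=-\varphi_k$ with $\|\varphi_k\|=1$. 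Hence the two vectors are antipodal, i.e. symmetric about the origin, exactly as the third condition requires. The main obstacle is precisely this segment case: setting up the tangent-cone trichotomy so that the intersection is genuinely pinned to $\ell$, and making the relocation-to-the-flattening-interior step rigorous so that the triple produced has \emph{exactly} the form demanded by one of the conditions --- including the automatic symmetry in the third; the polygon case and the forward implication are routine by comparison.
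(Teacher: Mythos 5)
Your proof is correct and follows essentially the same route as the paper's: the forward implication via Propositions \ref{thm:1cond}--\ref{thm:3cond}, and the converse by reducing to the polygon/segment dichotomy of Proposition \ref{thm:fttypes} and then classifying by the number of ray-cones, with the two-ray case yielding antipodal zero-type elements. Your tangent-cone language and the norm computation $\|\varphi_i+\varphi_j\|=2\neq1=\|{-\varphi_k}\|$ simply make rigorous the steps the paper treats informally (that a ray-cone must lie along the line of the segment, and that the two zero-type directions are distinct, hence symmetric about the origin).
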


\begin{proof}
If the norm satisfies at least one of the non-uniqueness conditions, then, by the corresponding assertion, there are three points on the plane for which the set of solutions is infinite.

Let there be three points $x_1,x_2,x_3$ for which $\ft(x_1,x_2,x_3)$ is not unique. Since the solution is the intersection of cones, it is either a polygon or a segment. In the case of a polygon, all three cones are nondegenerate angles. Consider the functionals defining the cones. Their level lines contain flattenings of the unit circle. Taking one interior point from each flattening, we obtain a consistent triple of elements of the first type, which is required in the first non-uniqueness condition.

A segment can be obtained in three ways: by the intersection of three angles, by the intersection of two angles and a ray, and by the intersection of one angle and two rays. In the first two cases, we similarly consider the functionals defining the cones. We obtain the first and second non-uniqueness conditions, respectively. Now let's turn to the third case. First, we obtain a consistent triple consisting of one flattening and two points. Secondly, in order for the solution to be non-unique, the rays must lie on the same straight line, that is, each of the two rays must contain the beginning of the other. Hence it follows that the elements of the zero type from the matched triple are points lying opposite each other with respect to the origin. Consequently, the third non-uniqueness condition is satisfied for this norm.
\end{proof}

\section{Lambda-normed planes}

In practice, the \ref{thm:criteria} criterion is not easy to apply, since the answer for a given norm is given by enumerating the flattenings of the unit circle. If we consider it as an algorithm, then its complexity directly depends on their number, and due to the presence of norms with an infinite number of flattenings, it is not even computable in finite time. Let us show its application to the most popular family of nonstrictly convex norms, the $\lambda$-planes. Extremal networks in $\lambda$-normed planes are studied by Ilyutko in \cite{bib15}, \cite{bib14}.

\begin{defin}
A $\lambda$\emph{-normed plane} is a plane whose norm is given by a regular $2\lambda$-gon.
\end{defin}

\begin{remark}
We use the notation $\dist(x, y)$ for the Euclidean distance between points $x$ and $y$ on the plane.
\end{remark}

\begin{pred}
If $\lambda\not\equiv0\mod{3}$, a $\lambda$-normed plane does not satisfy the first non-uniqueness condition.
\end{pred}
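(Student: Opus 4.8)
The plan is to reduce the existence of a consistent triple of three first-type elements to a purely Euclidean statement about the outward normals of the edges of the regular $2\lambda$-gon, and then to show that this statement can hold only when $3 \mid \lambda$.

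First I would observe that on a $\lambda$-normed plane the unit circle is a regular $2\lambda$-gon whose flattenings are precisely its $2\lambda$ edges, so the first-type elements are exactly the interior points of these edges. An interior point of an edge has a unique support line, namely the line carrying that edge, and therefore a unique norming functional. Identifying functionals with vectors via the Euclidean inner product, the functional norming the edge with unit outward normal $\mathbf{n}_i$ is $\varphi_i(x) = \tfrac{1}{a}\langle \mathbf{n}_i, x\rangle$, where $a$ is the common Euclidean distance from the origin to every edge (the apothem). Since this factor $\tfrac{1}{a}$ is the same for all edges by the regularity of the polygon, for a triple of first-type elements the consistency condition $\sum_{i=1}^{3}\varphi_i = 0$ of Definition \ref{thm:good3} is equivalent to $\mathbf{n}_1 + \mathbf{n}_2 + \mathbf{n}_3 = 0$.

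Next I would invoke the elementary fact that three unit vectors sum to zero if and only if they are pairwise at the angle $120^\circ$: from $|\mathbf{n}_1 + \mathbf{n}_2| = |{-}\mathbf{n}_3| = 1$ one obtains $\langle \mathbf{n}_1, \mathbf{n}_2 \rangle = -\tfrac{1}{2}$, and symmetrically for the remaining pairs. For a regular $2\lambda$-gon the edge normals point in the $2\lambda$ equally spaced directions $\theta_0 + \tfrac{\pi k}{\lambda}$, $k = 0, \dots, 2\lambda - 1$, with constant angular step $\tfrac{\pi}{\lambda}$. Three of these directions can be pairwise $120^\circ$ apart only if the required gap $\tfrac{2\pi}{3}$ is an integer multiple of the step $\tfrac{\pi}{\lambda}$, that is, only if $\tfrac{2\lambda}{3} \in \mathbb{Z}$; since $2$ is invertible modulo $3$, this occurs exactly when $3 \mid \lambda$.

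Hence, if $\lambda \not\equiv 0 \pmod 3$, no triple of edge normals can sum to zero, so no consistent triple consisting of three first-type elements exists, and the first non-uniqueness condition, which by its definition requires exactly such a triple (Proposition \ref{thm:1cond}), is not satisfied. The one step that genuinely requires care is the reduction in the second paragraph: one must check that for a first-type element the norming functional is forced (the support line must be the edge line) and that, under the Euclidean identification, it is a common positive multiple of the outward unit normal, so that ``the three functionals sum to zero'' really does collapse to ``the three unit normals sum to zero''. Everything afterwards is the short angle count above.
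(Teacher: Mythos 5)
Your proof is correct and follows essentially the same route as the paper's: both reduce the consistency condition to the statement that the three support lines (equivalently, their outward normals, which are equidistant from the origin by regularity) must form an equilateral configuration, and then rule this out because the edge directions of a regular $2\lambda$-gon are quantized in steps of $\frac{\pi}{\lambda}$, so a $120^{\circ}$ spacing forces $3\mid\lambda$. The only difference is one of presentation: the paper states tersely that the level lines ``form a regular triangle, which is impossible,'' while you spell out the identification of functionals with scaled unit normals and the final divisibility count explicitly.
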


\begin{proof}
Assume that the condition is satisfied and there are functionals $\varphi_1,\varphi_2,\varphi_3$ such that their sum is equal to zero and the lines $\varphi_i=1$ contain flattenings of the unit circle.

Let $l_i$ be straight lines defining the level lines $\varphi_i=1$. The sum of the vectors defining these lines is equal to zero, as well as $\dist(0,\;l_1)=\dist(0,\;l_2)=\dist(0,\;l_3)$, since $l_i$ contain flattening of a $2\lambda$-gon. Therefore, the level lines form a regular triangle, which is impossible for $\lambda\not\equiv0\mod{3}$.
\end{proof}

\begin{pred}
If $\lambda\not\equiv0\mod{3}$, a $\lambda$-normed plane does not satisfy the second non-uniqueness condition.
\end{pred}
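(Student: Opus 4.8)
The plan is to imitate the scheme of the previous proposition: reduce the existence of the required triple to one scalar equation, and then show that this equation is unsolvable when $3\nmid\lambda$. First I would fix the three level lines $l_1,l_2,l_3$ of the functionals $\varphi_1,\varphi_2,\varphi_3$. Two of them, say $l_1$ and $l_2$, are flattenings of the $2\lambda$-gon, so $\dist(0,l_1)=\dist(0,l_2)=\rho$, the apothem, while $l_3$ must meet the unit circle in a single point, hence supports it at a vertex. Representing each functional by the Euclidean normal of its level line, of length $1/\dist(0,l_i)$, the relation $\varphi_1+\varphi_2+\varphi_3=0$ gives $\varphi_3=-(\varphi_1+\varphi_2)$. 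Since $\varphi_1,\varphi_2$ point along edge-normal directions, their sum lies along the bisector of these two directions, which is a multiple of $\frac{\pi}{2\lambda}$ and therefore either a vertex direction or an edge-normal direction of the $2\lambda$-gon. Because $l_3$ touches the unit circle at exactly one point, the edge-normal case is impossible; thus $\varphi_3$ points exactly at a vertex, $l_3$ is orthogonal to that vertex ray, and $\dist(0,l_3)=R$, the circumradius.

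Next I would extract the scalar condition. If $2\beta$ denotes the angle between the two edge normals then $\beta=\frac{k\pi}{2\lambda}$ for an integer $k$, and the parity imposed above forces $k$ to be odd. Comparing lengths in $\varphi_3=-(\varphi_1+\varphi_2)$, where the two summands have length $1/\rho$ and the result has length $1/R$, yields $\frac{2}{\rho}\cos\beta=\frac1R$, that is
\[
2\cos\frac{k\pi}{2\lambda}=\cos\frac{\pi}{2\lambda},\qquad k\ \text{odd},
\]
using $\rho=R\cos\frac{\pi}{2\lambda}$. Hence the second non-uniqueness condition can hold only if this equation has an odd solution $k$, and it remains to exclude this for $3\nmid\lambda$.

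The hard part, and the point where the argument must genuinely depart from the previous proposition, is ruling this equation out. There the three lines were equidistant from the origin and so formed a regular triangle, which instantly required $3\mid\lambda$; here the distances are $\rho,\rho,R$, no regular triangle appears, and the obstruction becomes arithmetic. I would clear denominators and rewrite the equation as a vanishing sum of six roots of unity. Passing to the dual $2\lambda$-gon, $\varphi_1,\varphi_2$ are two of its vertices $V_1,V_2$, and the condition says that $-(V_1+V_2)$ is the midpoint of a dual edge $[W_1,W_2]$ with $W_1,W_2$ adjacent vertices, i.e. $2V_1+2V_2+W_1+W_2=0$; writing $\zeta=e^{i\pi/(2\lambda)}$ this becomes $2\zeta^{a}+2\zeta^{b}+\zeta^{c}+\zeta^{c+2}=0$ for odd exponents $a,b,c$. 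By the Lam--Leung / Conway--Jones description of vanishing sums of roots of unity, the number of summands, $6$, must be a non-negative integer combination of the primes dividing $4\lambda$; when $3\nmid\lambda$ the prime $3$ is unavailable, leaving only $6=2+2+2$, so the six roots split into three antipodal pairs. A short case check then finishes it: the repeated roots $\zeta^{a}$ (and likewise $\zeta^{b}$) cannot be antipodal to themselves, so once they are paired off the remaining two roots $\zeta^{c},\zeta^{c+2}$ must be antipodal, forcing $c+2\equiv c+2\lambda\pmod{4\lambda}$, i.e. $\lambda=1$. Thus no admissible odd $k$ exists for $3\nmid\lambda$, and the second non-uniqueness condition fails. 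I expect this root-of-unity classification to be the only genuinely nontrivial step; the geometric reduction preceding it is routine.
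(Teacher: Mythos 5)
Your geometric reduction is sound and in fact lands on the same scalar equation as the paper: with $n=2\lambda$, your relation $2\cos\frac{k\pi}{n}=\cos\frac{\pi}{n}$ is equivalent to the paper's $\frac{\pi k}{n}=\arctan\sqrt{4\tan^2\frac{\pi}{n}+3}$ (take tangents and use $\tan^2\beta=\frac{4}{\cos^2(\pi/n)}-1=4\tan^2\frac{\pi}{n}+3$). The paper reaches it by rotating coordinates and exploiting the isosceles triangle formed by the three level lines; you reach it through the dual polygon, with the extra (correct) observation that $k$ must be odd. One small omission on your side: you must dispose of the case $l_1\parallel l_2$ (antipodal flattenings), where $\varphi_1+\varphi_2=0$ would force $\varphi_3=0$, impossible for a norming functional; the paper treats this explicitly, and your ``bisector'' is undefined in that case.

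Where you genuinely depart from the paper is in killing the equation: the paper proves an explicit double inequality valid for all $n>2$, whereas you pass to roots of unity. Your conclusion is correct, but the pivotal step is a non sequitur as written. Lam--Leung's theorem constrains only the \emph{total} number of summands to lie in the monoid generated by the primes dividing $4\lambda$; since $6=2+2+2$, that constraint is satisfied whether or not $3\mid\lambda$, so by itself it gives nothing. What you actually need is: (i) split the vanishing sum into \emph{minimal} vanishing subsums; (ii) apply Lam--Leung to each part, so the part sizes form one of $\{2,2,2\}$, $\{2,4\}$, $\{6\}$; (iii) rule out a minimal part of size $4$ (elementary: four unit vectors summing to zero form a rhombus, hence two antipodal pairs); and (iv) rule out a minimal part of size $6$ over roots of unity of order prime to $3$. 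Point (iv) is the crux and cannot come from counting: when $3$ divides the order, minimal length-$6$ sums do exist, e.g.\ $\zeta_6+\zeta_6^5+\zeta_5+\zeta_5^2+\zeta_5^3+\zeta_5^4=0$, so excluding them is precisely where $3\nmid\lambda$ enters, via the Conway--Jones classification of minimal vanishing sums of small length. Since you cite Conway--Jones, the argument can be completed, but the sentence ``$6=2+2+2$, so the six roots split into three antipodal pairs'' is exactly where that deeper theorem must be invoked. Your concluding case check of the antipodal pairings is fine (with the remark that $\lambda=1$ is excluded because a $2$-gon is not a unit ball). Net comparison: the paper's inequality argument is elementary, self-contained, and shows the equation has no solution for any $n$; your route, once (iv) is properly quoted, is shorter and explains structurally why $3$ is the exceptional prime, but it rests on a substantially deeper external result.
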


\begin{proof}
Let us assume that the condition is satisfied and there are functionals $\varphi_1,\varphi_2,\varphi_3$ such that their sum is equal to zero, the lines $\varphi_1=1$ and $\varphi_2=1$ contain flattenings of the unit circle, and the line $\varphi_3 =1$ --- vertex.

Let $l_i$ be straight lines defining the level lines $\varphi_i=1$. Let's rotate the plane so that the line $l_3$ is parallel to the coordinate axis $x$. Let $l_1$ and $l_2$ be given by the equations $\alpha_1x+\beta_1y=1$ and $\alpha_2x+\beta_2y=1$, and $l_3$ be given by the equation $\gamma y=1$. Since $\sum_{i=1}^3\varphi_i=0$, then $\alpha_1+\alpha_2=0$ and $\beta_1+\beta_2+\gamma=0$. Since the flattenings of the polygon belong to the lines $l_1$ and $l_2$, we obtain $$\dist(0,\;l_1)=\dist(0,\;l_2)\Rightarrow\alpha_1^2+\beta_1^2= \alpha_2^2+\beta_2^2.$$
Note that the lines $l_1$ and $l_2$ are not parallel. Otherwise, the sum of the functionals $\varphi_1$ and $\varphi_2$ is equal to zero, since $l_1$ and $l_2$ are at the same distance from the origin. This means that the functional $\varphi_3$ is zero, but it is not, because its level line is the reference line to the unit circle. Considering this,
$$-\alpha_1=\alpha_2=\alpha,\;\beta_1=\beta_2=\beta,\;\gamma=-2\beta.$$
Thus, the equations for lines $l_i$ have taken the form:
$$-\alpha x+\beta y=1,\;\alpha x+\beta y=1,\;-2\beta y=1.$$
The triangle formed by the lines $l_i$ is isosceles, its base is parallel to the coordinate axis $x$. Let it be located below the axis, that is, let $\beta>0$. Let's also assume that $\alpha>0$, otherwise replace $\alpha$ with $-\alpha$. Let's draw a bisector from the point of intersection of the lines $l_1$ and $l_2$. The bisector lies on the y-axis. Two cases are possible: either it passes through the midpoints of opposite sides of the $2\lambda$-gon, or through a pair of opposite vertices. The first case contradicts the fact that the support line $l_3$ does not contain flattening. Then the bisector passes through the vertex touched by $l_3$. Since $l_3$ is parallel to the x-axis, we obtain the equality of the angles formed by the line $l_3$ and the sides of the $2\lambda$-gon having a common point with it.

We impose one more condition on the coefficients $\alpha$ and $\beta$. Let $n$ be the number of polygon vertices. We have
$$\dist(0,\;l_1)=\dist(0,\;l_2)=\cos{\frac{\pi}{n}}\dist(0,\;l_3)
\Rightarrow
\frac{1}{\sqrt{\alpha^2+\beta^2}}=\frac{\cos{\frac{\pi}{n}}}{2\beta}
\Rightarrow
\alpha^2+\beta^2=\frac{4}{\cos^2{\frac{\pi}{n}}}\beta^2.$$ 
Hence 
$$\frac{\alpha}{\beta}=\sqrt{\frac{4}{\cos^2{\frac{\pi}{n}}}-1}=\sqrt{4\tan^2{\frac{\pi}{n}}+3}.$$ 

If $\xi_{12}, \xi_{13}, \xi_{23}$ are the angles between the lines $l_1, l_2, l_3$, then $\xi_{13}=\xi_{23}$ and $\xi_{12}+\xi_{13}+\xi_{23}=\pi$. Since $\tan{\xi_{13}}=\tan{\xi_{23}}=\frac{\alpha}{\beta}$, then $$\xi_{12}=\pi-2\arctan{\tan{\xi_{13}}}=\pi-2\arctan{\sqrt{4\tan^2{\frac{\pi}{n}}+3}}.$$ 
On the other hand, $\xi_{12}=\pi-\frac{2\pi k}{n}$ for some $k$, $k\in\mathbb {N}$ $1\le k<\frac{ n}{2}$. Then 
$$\frac{\pi k}{n}=\arctan{\sqrt{4\tan^2{\frac{\pi}{n}}+3}}.$$
Let us prove that this equality does not hold for natural values of $k$ and $n$ such that $n\equiv\pm1\mod{3}$, and come to a contradiction with the original assumption. To prove this fact, it suffices to verify that the following double inequality is valid for all $n>2$:
$$\frac{n}{3}-\frac{1}{3}<\frac{n}{\pi}\arctan{\sqrt{4\tan^2{\frac{\pi}{n}}+3}}<\frac{n}{3}+\frac{1}{3}$$ 
First, we multiply the inequality by $\frac{\pi}{n}$:
$$\frac{\pi}{3}-\frac{\pi}{3n}<\arctan{\sqrt{4\tan^2{\frac{\pi}{n}}+3}}<\frac{\pi}{3}+\frac{\pi}{3n}.$$
The left side of the inequality is less than $\frac{\pi}{3}$ for all $n>2$, and the middle part is greater than $\frac{\pi}{3}$ for the same $n$, so the first inequality holds . In the remaining inequality, we take the tangent of both parts and square it:
$$4\tan^2{\frac{\pi}{n}}+3<\tan^2{\Big(\frac{\pi}{3}+\frac{\pi}{3n}\Big)}.$$
Next, we make the substitution $x=\tan{\frac{\pi}{3n}}$, and after a few simple transformations we get the inequality
$$4\Big(\frac{3x-x^3}{1-3x^2}\Big)^2+3<\Big(\frac{x+\sqrt{3}}{1-\sqrt{3}x}\Big)^2,$$
which must be carried out for $0<x<\frac{1}{\sqrt{3}}$. It is easy to see that this is indeed the case. The assertion has been proven.
\end{proof}

\begin{pred}
A $\lambda$-normed plane does not satisfy the third non-uniqueness condition.
\end{pred}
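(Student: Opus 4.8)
The plan is to argue by contradiction: suppose the third non-uniqueness condition holds, and extract a numerical obstruction from the requirement $\varphi_1+\varphi_2+\varphi_3=0$. Since a zero-type element is a vertex of the $2\lambda$-gon and the two zero-type elements are symmetric about the origin, they must be a pair of antipodal vertices $v$ and $-v$ (the antipode of a vertex of a regular $2\lambda$-gon is again a vertex), while the first-type element lies in the interior of some flattening with supporting line $l_3$. As in the two preceding proofs I would represent each functional $\varphi_i$ by the coefficient vector $w_i$ of its level line $l_i\colon w_i\cdot x=1$, so that consistency reads $w_1+w_2+w_3=0$ and $\dist(0,l_i)=1/|w_i|$.

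First I would carry out a parallelism (direction) step. Because $l_1$ and $l_2$ are support lines through $v$ and $-v$, the norming condition gives $w_1\cdot v=1$ and $w_2\cdot v=-1$, hence $(w_1+w_2)\cdot v=0$, and therefore $w_3\cdot v=0$. Thus the flattening $l_3$ must be parallel to the diagonal joining $v$ and $-v$. Comparing the edge directions with the vertex-to-antipode diagonal directions of a regular $2\lambda$-gon, one checks that an edge is parallel to such a diagonal only when $\lambda$ is odd; for every even $\lambda$ (in particular the square, $\lambda=2$) no such flattening exists, and the assumption already fails.

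It then remains to treat the odd values $\lambda\ge 3$, which I would do by a magnitude step that in fact works for all $\lambda\ge 3$. The vectors $w_1$ and $-w_2$ both lie on the line $\{w\colon w\cdot v=1\}$, and since the two support lines meet the polygon only at the vertex (not along an edge), $w_1$ and $-w_2$ lie strictly between the coefficient vectors of the two edges adjacent to $v$. Those edge vectors satisfy $\dist(0,\cdot)=a$, where $a=R\cos\frac{\pi}{2\lambda}$ is the apothem and $R$ the circumradius, so their transverse component equals $t_0=\sqrt{1/a^2-1/R^2}$; the transverse components of $w_1$ and $-w_2$ are then strictly smaller, giving $|w_1+w_2|<2t_0$. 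On the other hand $|w_3|=1/a$. A short computation shows that $2t_0\le 1/a$ is equivalent to $\cos\frac{\pi}{2\lambda}\ge \sqrt{3}/2$, i.e.\ to $\lambda\ge 3$, so that $|w_1+w_2|<2t_0\le 1/a=|w_3|$, contradicting $w_1+w_2=-w_3$.

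I expect the magnitude step to be the main obstacle. The delicate point is pinning down the \emph{strict} bound on the transverse component: this is exactly where the hypothesis that a zero-type support line touches the unit circle in a single point is used, since it excludes the extremal edge functionals which are the endpoints of the segment of norming functionals at $v$. After that, one must verify that the threshold $2t_0\le 1/a$ coincides precisely with $\lambda\ge 3$. Combined with the parallelism step, which disposes of all even $\lambda$ (and in particular the square, the only even value not reached by the magnitude step), and with the degenerate case $\lambda=1$, this covers every $\lambda$.
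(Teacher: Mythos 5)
Your proposal is correct and is essentially the paper's own argument: the parallelism step (the flattening must be parallel to the diagonal through the two antipodal vertices, which rules out all even $\lambda$, in particular the square) appears in the paper in exactly this form, and your magnitude step is the dual formulation of the paper's angle estimate, since your bound $|w_1+w_2|<2t_0\le 1/a=|w_3|$ and the paper's failed inequality $\tan\frac{\pi(n-2)}{2n}<2\cos\frac{\pi}{n}$ (with $n=2\lambda$ the number of vertices) both reduce to the same threshold $2\sin\frac{\pi}{2\lambda}\le 1$, i.e.\ $\lambda\ge3$. The only cosmetic difference is that the paper argues in the plane, via the intersection point of the two vertex support lines lying on the line $\varphi=-2$ and a comparison of angles with the adjacent edges, whereas you compare transverse components of the dual coefficient vectors; the case decomposition and the use of strictness (support lines at zero-type elements touch only at the vertex) are identical.
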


\begin{proof} 
Let's assume that the condition is met. Then there is a compatible triple consisting of an internal point of flattening and a pair of vertices lying symmetrically about the center of the polygon. Let $\varphi$ be the functional corresponding to the support line to the element of the first type. Its level line $\varphi=1$ contains a flattening of the polygon, and the line $\varphi=0$ passes through elements of type zero, since the sum of the functionals is equal to zero.
Thus, the norms given by $4n$-gons are no longer suitable, because one of the flattenings is parallel to the symmetry axis of the polygon. Next, let $n\geq6$, consider the support lines corresponding to elements of zero type. Their intersection point must belong to the line $\varphi=-2$. The angle between the line $\varphi=0$ and the adjacent flattening must be less than the angle between the line $\varphi=0$ and the reference line. Let's move on to the inequality of their tangents: $$\tan{\frac{\pi(n-2)}{2n}}<2\cos{\frac{\pi}{n}}$$
This inequality does not hold for $n\geq6$. That is, we obtain a contradiction for all $\lambda$-planes.
\end{proof}

\begin{defin}
Consider a triangle $ABC$ in the plane, all angles of which are less than $120^{\circ}$. Then $T$ is the \emph{Torricelli point} for the vertices $A,B,C$ if the angles $ATB,BTC,ATC$ are equal to $120^{\circ}$.
\end{defin}

\begin{remark}
We use the notation $\widehat{x}$ for $\frac{1}{\|x\|}x$ for $x\not=0$.
\end{remark}

\begin{pred}
Let $A = \{x_1, x_2, x_3\}$ be the set of points on the $\lambda$-normed plane that do not lie on one straight line, and $\lambda\equiv0\mod{3}$. Let all angles in a triangle with vertices $x_1, x_2, x_3$ be less than $120^{\circ}$, and $p$ be the Torricelli point for this triangle. Then if the points of $\widehat{x_i-p}$ do not coincide with the vertices of the $2\lambda$-gon, then $\ft(A)$ is a non-degenerate convex polygon, otherwise $\ft(A)=\{p\}$ .
\end{pred}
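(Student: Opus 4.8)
The plan is to run the geometric method of Theorems \ref{thm:ftpoint} and \ref{thm:ftlocus}, exploiting the $3$-fold rotational symmetry of the unit circle that is forced by $\lambda\equiv0\bmod 3$. Write $d_i=\widehat{x_i-p}$ and let $R$ denote the Euclidean rotation by $120^{\circ}$ about the origin. Since $p$ is the Torricelli point, the three rays $p\to x_i$ make mutual Euclidean angles of $120^{\circ}$, so the rays carrying $d_1,d_2,d_3$ are obtained from one another by $R$. Because $2\lambda$ is divisible by $6$, the $2\lambda$-gon is invariant under $R$; as $R$ maps the ray through $d_1$ to the ray through $d_2$ and the unit circle to itself, it maps the boundary point $d_1$ to $d_2$, i.e. $d_2=Rd_1$ and $d_3=R^2d_1$. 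This already yields the dichotomy: $R$ permutes the vertices among themselves and the edge-interiors among themselves, so either all three $d_i$ are vertices (elements of zero type) or all three are interior edge points (elements of the first type), and these are exactly the two cases in the statement.

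First I would show $p\in\ft(A)$ in both cases by producing norming functionals that sum to zero. Fix a norming functional $\varphi_1$ for $x_1-p$ and set $\varphi_2=\varphi_1\circ R^{-1}$, $\varphi_3=\varphi_1\circ R^{-2}$. Since the norm is $R$-invariant, each $\varphi_i$ still has operator norm $1$, and $\varphi_i$ norms $x_i-p$ because it norms $d_i=R^{i-1}d_1$. Using the identity $I+R+R^{2}=0$ on $\mathbb{R}^2$ (the three rotations by $0^{\circ},120^{\circ},240^{\circ}$ sum to the zero map) one gets $\varphi_1+\varphi_2+\varphi_3=\varphi_1\circ(I+R^{-1}+R^{-2})=0$. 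By Theorem \ref{thm:ftpoint}, $p\in\ft(A)$; note $p\neq x_i$ since the Torricelli point lies strictly inside the triangle.

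Next I would compute $\ft(A)=C(x_1,\varphi_1)\cap C(x_2,\varphi_2)\cap C(x_3,\varphi_3)$ via Theorem \ref{thm:ftlocus}, choosing the functionals conveniently and invoking the remark that the intersection is independent of this choice. In the edge-interior case, the level line $\varphi_i=1$ contains a whole edge, so $\bigl\{a:\varphi_i(a)=\|a\|\bigr\}$ is a non-degenerate angular sector and $x_i-p$, having direction $d_i$ interior to that edge, lies in its interior; hence $p$ is an interior point of each cone $C(x_i,\varphi_i)$, so the intersection contains a neighborhood of $p$ and is two-dimensional. By Proposition \ref{thm:fttypes} it is then a non-degenerate convex polygon. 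In the vertex case I would instead pick at the vertex $d_1$ a generic support line (one meeting the polygon only at $d_1$) and rotate it by $R$ to get the symmetric triple; now $\bigl\{a:\varphi_i(a)=\|a\|\bigr\}$ is the single ray through $d_i$, so $C(x_i,\varphi_i)=\{x_i-t d_i: t\ge0\}$ is the ray from $x_i$ through $p$ in direction $-d_i$. The three lines through $p$ with directions $d_1,d_2,d_3$ are pairwise non-collinear (mutual $120^{\circ}$ angles), so the three rays meet only at $p$, giving $\ft(A)=\{p\}$.

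The main obstacle, and the step deserving the most care, is the verification that the symmetric choice of functionals is legitimate and really sums to zero: this requires that all edges of the regular $2\lambda$-gon are equidistant from the center (equal apothem) and that the norm is genuinely $R$-invariant, both of which hinge on $3\mid\lambda$, together with the vanishing of $I+R+R^2$. A secondary point to nail down is that in the vertex case a \emph{generic} support line exists at each vertex and that the symmetric triple is a valid norming choice, so that the remark after Theorem \ref{thm:ftlocus} applies and the ray computation indeed gives the full solution set $\{p\}$.
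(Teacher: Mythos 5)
Your proof is correct, and its skeleton --- produce a symmetric triple of norming functionals summing to zero, apply Theorems \ref{thm:ftpoint} and \ref{thm:ftlocus}, then analyze the cone intersection --- is the same as the paper's. You diverge in two worthwhile ways. First, you make explicit what the paper leaves implicit: that $3\mid\lambda$ makes the $2\lambda$-gon invariant under the $120^{\circ}$ rotation $R$, that this forces the dichotomy (all three $\widehat{x_i-p}$ are vertices, or all three are interior points of flattenings), and that the vanishing of the sum of functionals follows from $\varphi_i=\varphi_1\circ R^{-(i-1)}$ together with $I+R^{-1}+R^{-2}=0$. Second, and more substantively, in the vertex case the paper chooses the adjacent flattenings ``in the same traversal direction,'' so each cone is a non-degenerate angle with $p$ on its boundary, and the conclusion $\cap_i C(x_i,\varphi_i)=\{p\}$ is justified only by the phrase ``due to the symmetry'' (the missing argument: a convex set invariant under the $120^{\circ}$ rotation about $p$ and larger than $\{p\}$ would contain $p$ in its interior, contradicting that $p$ lies on the boundary of each cone). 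You instead pick, at each vertex, a support line meeting the polygon only at that vertex; then each cone degenerates to the ray from $x_i$ through $p$, and the intersection of three rays lying on distinct lines through $p$ is trivially $\{p\}$. This choice is legitimate by the Remark following Theorem \ref{thm:ftlocus}, since the intersection does not depend on which admissible functionals are used, and it converts the paper's appeal to symmetry into a one-line computation. Both routes prove the same statement; yours is cleaner precisely at the step where the paper is vaguest.
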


\begin{proof}
Let us construct the norming functionals $\varphi_1,\;\varphi_2,\;\varphi_3$ for the vectors $x_1-p,\;x_2-p,\;x_3-p$. If $\widehat{x_i-p}$ belong to flattenings of the unit circle, then we put $\varphi_i=1$ on this flattening. If $\widehat{x_i-p}$ are vertices of a polygon, then we choose flattenings in the same traversal direction. Then $\varphi=\varphi_1+\varphi_2+\varphi_3=0$ and by Theorem \ref{thm:ftpoint} $p\in \ft(A)$. By Theorem \ref{thm:ftlocus} $\ft(A)=\cap_{i=1}^3 C(x_i,\;\varphi_i)$. In the first case, each cone $C(x_i,\;\varphi_i)$ together with $p$ contains some neighborhood $p$, and $\ft(A)$ is a non-degenerate convex polygon, otherwise the point $p$ lies on the boundary of each of the cones $C(x_i,\;\varphi_i)$ and due to the symmetry $\cap_{i=1}^3 C(x_i,\;\varphi_i)=\{p\}$
\end{proof}

The assertions proved above imply a theorem classifying the norms of a given family with respect to the uniqueness of the solution of the Fermat--Torricelli problem.

\begin{theorem}
On a $\lambda$-normed plane, the solution of the Fermat--Torricelli problem is unique for any three points if and only if $\lambda\not\equiv0\mod{3}$.
\end{theorem}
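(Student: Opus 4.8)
The plan is to derive the classification directly from the uniqueness criterion, Theorem~\ref{thm:criteria}, which states that the solution is unique for every triple of points exactly when the norm satisfies none of the three non-uniqueness conditions. It therefore suffices to split into the two cases $\lambda\not\equiv0\mod{3}$ and $\lambda\equiv0\mod{3}$ and, in each, to read off from the propositions above which of the three conditions can occur.

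First I would dispatch the case $\lambda\not\equiv0\mod{3}$. Here the three propositions proved above supply everything: the first two show that such a $\lambda$-plane satisfies neither the first nor the second non-uniqueness condition, and the third shows that no $\lambda$-plane satisfies the third condition. Since none of the three conditions holds, Theorem~\ref{thm:criteria} gives at once that $\ft(A)$ is a singleton for every three-point set $A$.

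For the converse I would exhibit, when $\lambda\equiv0\mod{3}$, a single triple with an infinite solution set. The structural observation is that $3\mid\lambda$ is equivalent to $6\mid 2\lambda$, so rotation by $120^{\circ}$ is a symmetry of the regular $2\lambda$-gon. Fixing a flattening and rotating it by $\pm120^{\circ}$ yields three flattenings whose interior directions are mutually at $120^{\circ}$. Placing $x_1,x_2,x_3$ at equal Euclidean distances from a common apex $p$ along these three directions produces an (equilateral) triangle whose angles are all $60^{\circ}<120^{\circ}$ and for which $p$, subtending the vertices at $120^{\circ}$, is the Torricelli point, with each $\widehat{x_i-p}$ an interior point of a flattening. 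The proposition on the Torricelli point then makes $\ft(A)$ a non-degenerate convex polygon, so uniqueness fails, completing the criterion.

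The assembly is routine once the preceding propositions are in place --- indeed all of the genuine difficulty, in particular the trigonometric inequality ruling out the second condition, has already been absorbed into those statements. The one point in the present argument that requires attention is the construction in the converse direction: I must be sure the three Torricelli directions can be made to land strictly inside flattenings rather than at vertices, which the proposition separates out as the alternative giving $\ft(A)=\{p\}$. This is guaranteed by the $120^{\circ}$ rotational symmetry together with the positive length of each flattening, leaving ample freedom to keep all three directions off the finitely many vertices.
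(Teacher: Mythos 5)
Your proof is correct and takes essentially the same route as the paper: the forward direction assembles the three propositions about the non-uniqueness conditions with Theorem~\ref{thm:criteria}, and the converse for $\lambda\equiv0\mod{3}$ invokes the Torricelli-point proposition, which you instantiate with an explicit equilateral triangle whose directions $\widehat{x_i-p}$ land strictly inside flattenings (guaranteed by the $120^{\circ}$ rotational symmetry of the $2\lambda$-gon). The paper leaves this assembly implicit (``the assertions proved above imply a theorem\ldots''), so your write-up merely spells out the same argument.
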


\label{end}

\end{document}